\newtheorem{thm}{Theorem}[section]
\newtheorem{prop}[thm]{Proposition}
\theoremstyle{definition}
\newtheorem{defn}[thm]{Definition}
\theoremstyle{remark}
\numberwithin{equation}{section}
\newcommand{\Real}{\mathbb R}
\newcommand{\grd}[1]{\nabla_{#1}}
\newcommand{\dda}[2]{\nabla_{\!#1}\nabla_{\!#2}}
\newcommand{\hh}[2]{h_{#1m}h_{n#2}g^{mn}}
\newcommand{\rh}[2]{R_{#1m}h_{n#2}g^{mn}}
\newcommand{\rhh}[4]{R_{m#1#2s}h_{#3n}h_{t#4}g^{mn}g^{st}}
\newcommand{\rdh}[4]{R_{m#1#2s}h_{nt}h_{#3#4}g^{mn}g^{st}}
\newcommand{\hhh}[2]{h_{#1k}h_{lm}h_{n#2}g^{kl}g^{mn}}
\newcommand{\hhhh}[4]{h_{#1m}h_{#2s}h_{#3n}h_{#4t}g^{mn}g^{st}}
\newcommand{\hhdh}[4]{h_{#1m}h_{#2s}h_{nt}h_{#3#4}g^{mn}g^{st}}
\newcommand{\dgdt}[2]{\frac{\partial g_{#1#2}}{\partial t}}
\newcommand{\dhdt}[2]{\frac{\partial h_{#1#2}}{\partial t}}
\begin{document}

\title[]{Hyperbolic structures on closed spacelike manifolds}
\author{Kun Zhang}

\address{Department of Mathematics\\Sun Yat-Sen University\\ Guangzhou}
\email{zhangkun@mail2.sysu.edu.cn}%

\keywords{hyperbolic structure, intrinsic mean curvature flow, closed spacelike manifold}%

\date{September 26, 2008}
\begin{abstract}
In this paper, we study the intrinsic mean curvature flow on certain
closed spacelike manifolds, and prove the existence of hyperbolic
structures on them.
\end{abstract}
\maketitle

\section{Introduction}
Recall that a Riemannian manifold $(M,g)$ is hyperbolic if it has
constant negative sectional curvature. These manifolds all come
from the quotient of hyperbolic space $\mathbb{H}^n$ by discrete
isometry groups. However, it is difficult to find a good intrinsic
characterization on the existence of hyperbolic structures on a
given manifold. First, we know that some negatively pinched
Riemannian manifolds can not admit hyperbolic metric. In
\cite{GT}, for $n\geq4$, the counterexample contrasts sharply with
the pinching theorem of positively curved manifolds. In \cite{Fa},
it was shown that for $n\geq10$ the space of negatively curved
metric on some n-manifold is highly non-connected. This implies
that for a given negatively curved metric, it is not always
possible to deform it into a metric with constant negative
curvature by any geometric flows.

\par In this paper, motivated by Lorentzian geometry, we will show that the hyperbolic
structure exists naturally on a large class of spacelike manifolds.
The motivation is the following. It is well known that the imaginary
unit sphere of Minkowski space $\Real^{1,n}$ is the model of
hyperbolic spaces, where under Cartesian coordinates
$(x^0,x^1,\cdots,x^n)$ on $\Real^{1,n}$, the Minkowski metric is
$$g=-(dx^0)^2+(dx^1)^2+\cdots+(dx^n)^2$$ and the equation of imaginary unit sphere is
$$-(x^0)^2+(x^1)^2+\cdots+(x^n)^2=-1.$$ This can be seen from Gauss-Codazzi equations
$$ \left\{
\begin{aligned}
    &R_{ijkl}-(h_{il}h_{jk}-h_{ik}h_{jl})=0\\
    &\nabla_{i}h_{jk}-\nabla_{j}h_{ik}=0
\end{aligned},
\right.
$$
where $h_{ij}$ is the second fundamental form, and $h_{ij}$ equals
to $g_{ij}$ on the imaginary unit sphere. In this paper, we are
interested in an intrinsic generalization of this model.
\begin{defn}
We call a triple $(M,g_{ij},h_{ij})$ a spacelike manifold, if
$(M,g_{ij})$ is a Riemannian manifold, and $h_{ij}$ is a symmetric
tensor satisfying the Gauss-Codazzi equations
$$ \left\{
\begin{aligned}
    &R_{ijkl}-(h_{il}h_{jk}-h_{ik}h_{jl})=0\\
    &\nabla_{i}h_{jk}-\nabla_{j}h_{ik}=0.
\end{aligned}
\right.
$$
\end{defn}
Now we state the main theorem of this paper in the following.
\begin{thm}
Let $(M,g,h)$ be a n-dimensional $(n\geq4)$ closed spacelike
manifold with $h_{ij}>0$, then $M$ admits a hyperbolic metric.
\end{thm}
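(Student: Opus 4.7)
The natural strategy, suggested by the keyword ``intrinsic mean curvature flow'' and by the imaginary unit sphere model, is to evolve the pair $(g,h)$ by a parabolic system that preserves the Gauss--Codazzi constraints and drives $h$ towards $g$ (up to rescaling), since by the Gauss equation $h_{ij}=g_{ij}$ forces constant sectional curvature $-1$.

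Concretely, I would introduce the coupled evolution
\[
\frac{\partial g_{ij}}{\partial t} = -2H h_{ij}, \qquad \frac{\partial h_{ij}}{\partial t} = \Delta h_{ij} + Q_{ij}(h),
\]
where $H = g^{ij}h_{ij}$ and $Q_{ij}(h)$ is the quadratic term forced on us by requiring that $R_{ijkl} = h_{il}h_{jk}-h_{ik}h_{jl}$ be preserved in time; this is precisely the intrinsic reading of the evolution equations for the induced metric and second fundamental form of a spacelike hypersurface under mean curvature flow. I would check that the Codazzi equation is then automatically preserved via the contracted second Bianchi identity, and establish short-time existence by a DeTurck-type modification of the resulting quasilinear parabolic system.

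The core of the proof would follow the blueprint of Huisken's convergence theorem for convex hypersurfaces. First, the positivity $h_{ij}>0$ should be preserved by a tensor maximum principle, using that the Gauss relation forces strictly negative sectional curvature and that the reaction terms in the evolution of $h_{ij}$ carry a favourable sign. Then, after a scale-invariant rescaling $\tilde g_{ij} = e^{-ct}g_{ij}$ normalizing the mean of $H$, one studies the traceless part $\mathring h_{ij}=h_{ij}-\tfrac{H}{n}g_{ij}$ and proves a pinching estimate $|\mathring h|^{2}/H^{2}\to 0$. Combined with Simons-type identities and $L^{p}$ iteration, this should yield uniform $C^{\infty}$ bounds, so that the rescaled flow converges smoothly to a limit with $\tilde h=\tilde g$, which by the Gauss equation is hyperbolic.

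The hard part will be the pinching estimate together with the higher-order regularity needed to promote it to smooth convergence. The hypothesis $n\geq 4$ should enter precisely in the Stampacchia iteration applied to $f_{\sigma} = |\mathring h|^{2}/H^{2-\sigma}$ for small $\sigma>0$, where certain Sobolev exponents depending on $n$ must be subcritical, as in Huisken's original argument. A secondary difficulty is ruling out finite-time singularities of the unnormalized flow, which should follow from the strong a priori curvature pinching forced by $R_{ijkl}=h_{il}h_{jk}-h_{ik}h_{jl}$ together with $h>0$, propagated along the flow by the maximum principle.
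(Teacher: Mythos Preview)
Your overall architecture---evolve $(g,h)$ by an intrinsic mean curvature flow preserving Gauss--Codazzi, keep $h>0$ by a tensor maximum principle, and drive $\mathring h\to 0$---matches the paper's. You also correctly anticipate the DeTurck trick for short-time existence and the preservation of a pinching $\varepsilon Hg\le h\le\beta Hg$. Two points, however, diverge from the paper, and one of them is a genuine gap.

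First, a sign: in the Lorentzian setting the induced metric \emph{expands}, so the paper's flow (after using Gauss) reads $\partial_t g_{ij}=+2Hh_{ij}$, not $-2Hh_{ij}$. Correspondingly the unnormalised flow exists for all time with $|A|^2\le(2t+|A|^{-2}_{\max}(0))^{-1}\to 0$, coming directly from $(\partial_t-\Delta)|A|^2=-2|\nabla A|^2-2|A|^4$; there is no finite-time singularity to rule out.

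Second, and more substantively, the convergence mechanism and the role of $n\ge 4$ are not what you describe. The paper does \emph{not} run a Huisken-style Stampacchia iteration on $f_\sigma=|\mathring h|^2/H^{2-\sigma}$. It instead proves the monotonicity formula
\[
\frac{d}{dt}\int_M H^n\,d\mu
=-n(n-1)\int_M\frac{|\nabla H|^2}{H^2}H^n\,d\mu
-n\int_M|\mathring h|^2 H^n\,d\mu,
\]
chooses $t_k\to\infty$ along which $t_k$ times the right-hand side tends to zero, parabolically rescales so that $|A|_{\max}=1$ at $t_k$, and extracts a Cheeger--Gromov limit via Hamilton's compactness; the limit then has $\nabla H=0$ and $\mathring h=0$, hence constant negative sectional curvature by Gauss. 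The hypothesis $n\ge 4$ enters \emph{only} in the compactness step, through Gromov's theorem that a closed $n$-manifold with $-1\le\mathrm{sec}<0$ satisfies $\mathrm{Vol}\ge C_n(1+\mathrm{diam}^{\alpha})$ for $n\ge 4$; this converts the volume bound (from $\int H^n\,d\mu\le C$ and the two-sided $H$ control) into a uniform diameter bound, whence an injectivity-radius lower bound by Cheeger's lemma.

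Your explanation that $n\ge 4$ comes from Sobolev exponents in the Stampacchia step is incorrect: Huisken's iteration in the Euclidean case works for every $n\ge 2$. More seriously, that iteration relies on the extrinsic Michael--Simon Sobolev inequality, which is unavailable for an intrinsic flow. Any intrinsic Sobolev inequality you substitute will have a constant depending on diameter and injectivity radius, so to close your argument you would first need precisely the geometric control that Gromov's theorem supplies---and with it the restriction $n\ge 4$. Without identifying this input, the pinching-improvement step in your proposal cannot be carried out.
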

The idea is to use geometric flows. We define an intrinsic mean
curvature flow of $(g,h)$:
\begin{equation}\label{main}
    \left\{
    \begin{aligned}
        \dgdt{i}{j}\ & =\ -2R_{ij}+2\hh{i}{j}\\
        \dhdt{i}{j}\ & =\ \triangle h_{ij}-\rh{i}{j}-\rh{j}{i}\\
                     &\quad+2\hhh{i}{j}-|A|^2h_{ij}\\
    \end{aligned}
    \right.
\end{equation}
with $g_{ij}(x,0)=\tilde{g}_{ij}(x),\
h_{ij}(x,0)=\tilde{h}_{ij}(x)$, where $\tilde{g}_{ij}(x)$ is the
initial metric on $M$ and $\tilde{h}_{ij}(x)$ is the initial data of
$h_{ij}$ and $|A|^2=g^{ik}g^{jl}h_{ij}h_{kl}$.
\par Mean curvature flow has been intensively studied in recent years
(see \cite{Hu84} for Euclidean ambient space and \cite{Ec} for
Minkowski ambient space). Notice that in extrinsic mean curvature
flow (with ambient space $\Real^{1,n}$), we deform the position
vector $F$ by the evolution equation
\begin{equation*}
    \frac{\partial F}{\partial t}=-H,
\end{equation*}
and \eqref{main} is just the equations of the metric and the second
fundamental form. Here, our observation is that \eqref{main} itself
is also an intrinsicly defined evolution system of $(g,h)$, and it
has its own right to be studied.\\
In this paper, we solve \eqref{main} intrinsicly and show that the
solution exists for all time $[0,\infty)$ and converges (after
normalization)
to a hyperbolic metric.\\[3mm]
\textbf{Acknowledgement} \ I am grateful to my advisor Professor
B.L.Chen for his guidance.
\section{Short-Time Existence and Uniqueness}
Since \eqref{main} is not a strictly parabolic system, in order to
apply theory of strictly parabolic equation to get short time
existence, we use a trick of De Turck by combining our evolution
equation \eqref{main} with the harmonic map flow.
\par Let $(M^n,g_{ij}(x))$ and $(N^m,s_{\alpha\beta}(y))$ be two
Riemannian manifolds, $F:M^n \rightarrow N^m$ be a map. The harmonic
map flow is the following evolution equation for maps from $M^n$ to
$N^m$,
\begin{equation}\label{1}
    \left\{
    \begin{aligned}
        &\frac{\partial}{\partial t}F(x,t)=\triangle F(x,t),\quad
        &&\mathrm {for}\ x \in M^n, t>0, \\
        &F(x,0)=F(x),\quad &&\mathrm {for}\ x\in M^n,
    \end{aligned}
    \right.
\end{equation}
where $\triangle$ is defined by using the metrics $g_{ij}(x)$ and
$s_{\alpha\beta}(y)$ as follows
$$\triangle F^\alpha(x,t)=g^{ij}(x)\grd{i}\grd{j}F^\alpha(x,t),$$
and
\begin{equation}\label{2}
    \grd{i}\grd{j}F^\alpha(x,t)
    =\frac{\partial^2 F^\alpha}{\partial x^i \partial x^j}
    -\Gamma^{k}_{ij}\frac{\partial F^\alpha}{\partial x^k}
    +\tilde{\Gamma}^{\alpha}_{\beta\gamma}\frac{\partial F^\beta}{\partial x^i}
     \frac{\partial F^\gamma}{\partial x^j}.
\end{equation}
Here we use $\{x^i\}$ and $\{y^\alpha\}$ to denote the local
coordinates of $M^n$ and $N^m$ respectively, $\Gamma^{k}_{ij}$ and
$\tilde{\Gamma}^{\alpha}_{\beta\gamma}$ the corresponding
Christoffel symbols of $g_{ij}$ and $s_{\alpha\beta}$. The harmonic
map flow is strictly parabolic, so for any initial data, there
exists a short time smooth solution.
\par Let $(g_{ij}(x,t),h_{ij}(x,t))$ be a complete smooth solution of our
evolution equation \eqref{main}, then the harmonic map flow coupled
with our evolution equation is the following equation:
\begin{equation}\label{3}
    \left\{
    \begin{aligned}
        &\frac{\partial}{\partial t}F(x,t)=\triangle_t F(x,t),\quad
        &&\mathrm {for}\ x \in M^n, t>0, \\
        &F(x,0)=\mathrm {identity},\quad &&\mathrm {for}\ x\in M^n,
    \end{aligned}
    \right.
\end{equation}
where $\triangle_t$ is defined by using the metrics $g_{ij}(x,t)$
and $s_{\alpha\beta}(y)$.
\par Let $(F^{-1})^{*}g$ and $(F^{-1})^{*}h$ be the one-parameter
families of pulled back metrics and pull back tensors on the target
$(N^n,s_{\alpha\beta})$. Denote
$\hat{g}_{\alpha\beta}(y,t)=((F^{-1})^{*}g)_{\alpha\beta}(y,t)$ and
$\hat{h}_{\alpha\beta}(y,t)=((F^{-1})^{*}h)_{\alpha\beta}(y,t)$.
Then by direct calculations, $\hat{g}_{\alpha\beta}(y,t)$ and
$\hat{h}_{\alpha\beta}(y,t)$ satisfy the following evolution
equation:
\begin{equation}\label{4}
    \left\{
    \begin{aligned}
        \frac{\partial\hat{g}_{\alpha\beta}}{\partial t}(y,t)\ & =\
            -2\hat{R}_{\alpha\beta}(y,t)+2\hat{h}_{\alpha\sigma}\hat{h}_{\rho\beta}\hat{g}^{\sigma\rho}
            +\grd{\alpha}V_{\beta}+\grd{\beta}V_{\alpha}\\
        \frac{\partial\hat{h}_{\alpha\beta}}{\partial t}(y,t)\ & =\
            \triangle \hat{h}_{\alpha\beta}(y,t)-\hat{R}_{\alpha\sigma}\hat{h}_{\rho\beta}\hat{g}^{\sigma\rho}
            -\hat{R}_{\beta\sigma}\hat{h}_{\rho\alpha}\hat{g}^{\sigma\rho}\\
            &\quad+2\hat{h}_{\alpha\lambda}\hat{h}_{\mu\nu}\hat{h}_{\rho\beta}\hat{g}^{\lambda\mu}\hat{g}^{\nu\rho}
            -|\hat{A}|^2\hat{h}_{\alpha\beta}\\
            &\quad+\hat{h}_{\beta\gamma}\grd{\alpha}V^{\gamma}
            +\hat{h}_{\alpha\gamma}\grd{\beta}V^{\gamma}\\
    \end{aligned}
    \right.
\end{equation}
where
$V^\alpha=g^{\beta\gamma}(\Gamma^{\alpha}_{\beta\gamma}(\hat{g})
-\tilde{\Gamma}^{\alpha}_{\beta\gamma}(s))$,
$\Gamma^{\alpha}_{\beta\gamma}(\hat{g})$ and
$\tilde{\Gamma}^{\alpha}_{\beta\gamma}(s)$ are the Christoffel
symbols of the metrics $\hat{g}_{\alpha\beta}(y,t)$ and
$s_{\alpha\beta}(y)$ respectively. Here we analysis the principle
part of the right side of \eqref{4}. One can see
\begin{equation*}
    \begin{aligned}
    &-2\hat{R}_{\alpha\beta}(y,t)+2\hat{h}_{\alpha\sigma}\hat{h}_{\rho\beta}\hat{g}^{\sigma\rho}
        +\grd{\alpha}V_{\beta}+\grd{\beta}V_{\alpha}\\
    =&\hat{g}^{\mu\nu}\frac{\partial^2 \hat{g}_{\alpha\beta}}{\partial y^\mu\partial y^\nu}
    +(\mathrm{lower\ order\ terms})
    \end{aligned}
\end{equation*} and
\begin{equation*}
    \begin{aligned}
    &\triangle \hat{h}_{\alpha\beta}(y,t)-\hat{R}_{\alpha\sigma}\hat{h}_{\rho\beta}\hat{g}^{\sigma\rho}
            -\hat{R}_{\beta\sigma}\hat{h}_{\rho\alpha}\hat{g}^{\sigma\rho}\\
    &+2\hat{h}_{\alpha\lambda}\hat{h}_{\mu\nu}\hat{h}_{\rho\beta}\hat{g}^{\lambda\mu}\hat{g}^{\nu\rho}
            -|\hat{A}|^2\hat{h}_{\alpha\beta}
            +\hat{h}_{\beta\gamma}\grd{\alpha}V^{\gamma}
            +\hat{h}_{\alpha\gamma}\grd{\beta}V^{\gamma}\\
    =&\ \hat{g}^{\mu\nu}\Big(\frac{\partial^2 \hat{h}_{\alpha\beta}}{\partial y^\mu\partial y^\nu}
        -\frac{\partial \Gamma^{\sigma}_{\alpha\mu}}{\partial y^\nu}\hat{h}_{\sigma\beta}
        -\frac{\partial \Gamma^{\sigma}_{\beta\mu}}{\partial
        y^\nu}\hat{h}_{\sigma\alpha}\Big)\\
    &-\hat{g}^{\mu\nu}\Big(-\frac{\partial \Gamma^{\sigma}_{\alpha\mu}}{\partial y^\nu}
        +\frac{\partial \Gamma^{\sigma}_{\mu\nu}}{\partial y^\alpha}\Big)\hat{h}_{\sigma\beta}
    -\hat{g}^{\mu\nu}\Big(-\frac{\partial \Gamma^{\sigma}_{\beta\mu}}{\partial y^\nu}
        +\frac{\partial \Gamma^{\sigma}_{\mu\nu}}{\partial
        y^\beta}\Big)\hat{h}_{\sigma\alpha}\\
    &+\hat{g}^{\mu\nu}\frac{\partial \Gamma^{\gamma}_{\mu\nu}}{\partial y^\alpha}\hat{h}_{\gamma\beta}
    +\hat{g}^{\mu\nu}\frac{\partial \Gamma^{\gamma}_{\mu\nu}}{\partial y^\beta}\hat{h}_{\gamma\alpha}
    +(\mathrm{lower\ order\ terms})\\
    =&\ \hat{g}^{\mu\nu}\frac{\partial^2 \hat{h}_{\alpha\beta}}{\partial y^\mu\partial y^\nu}
    +(\mathrm{lower\ order\ terms}).
    \end{aligned}
\end{equation*}
Hence
\begin{equation}\label{5}
    \left\{
    \begin{aligned}
    &\frac{\partial\hat{g}_{\alpha\beta}}{\partial t}(y,t)=
        \hat{g}^{\mu\nu}\frac{\partial^2 \hat{g}_{\alpha\beta}}{\partial y^\mu\partial y^\nu}
        +(\mathrm{lower\ order\ terms})\\
    &\frac{\partial\hat{h}_{\alpha\beta}}{\partial t}(y,t)=
        \hat{g}^{\mu\nu}\frac{\partial^2 \hat{h}_{\alpha\beta}}{\partial y^\mu\partial y^\nu}
        +(\mathrm{lower\ order\ terms})
    \end{aligned}
    \right.
\end{equation}
and we know \eqref{4} is a strictly parabolic system. By theory of
strictly parabolic equations, for any initial data \eqref{4} exists
a smooth short time solution.
\par So we can recover the solution $(g,h)$ for the original
evolution equations from the solution $(\hat{g},\hat{h})$ as
following. Let
$(N^n,s_{\alpha\beta})=(M^n,g_{\alpha\beta}(\cdot,0))$ and since
\begin{equation}\label{18}
V^\alpha=g^{\beta\gamma}(\Gamma^{\alpha}_{\beta\gamma}(\hat{g})
-\tilde{\Gamma}^{\alpha}_{\beta\gamma}(s))=-(\triangle F\circ
F^{-1})^\alpha,
\end{equation}
thus
\begin{equation}\label{19}
    \frac{\partial F}{\partial t}=-V\circ F.
\end{equation}
 Now once having
$\hat{g}_{\alpha\beta}$,we know $V$ and we can solve \eqref{19}
which is just a system of ordinary differential equations on the
domain $M$. Hence $(g,h)$ can be recovered as the pull-back
$g=F^*\hat{g}$ and $h=F^*\hat{h}$.
\par Now we claim the solutions of \eqref{main} with given smooth initial conditions
on a compact manifold are unique. For suppose $(g_1,h_1)$ and
$(g_2,h_2)$ are two solutions which agree at $t=0$. We can solve the
coupled harmonic map flow \eqref{3} for maps $F_1$ and $F_2$ with
the metrics $g_1$ and $g_2$ on $M$ into the same target $N$ with the
same fixed $s$, and starting at the same initial data. Then we have
two solutions $\hat{g}_1$ and $\hat{g}_2$ on $N$ with the same
initial metric. By the standard uniqueness result for strictly
parabolic equations, we have
$(\hat{g}_1,\hat{h}_1)=(\hat{g}_2,\hat{h}_2)$. Hence by \eqref{18}
the corresponding vector fields $V_1=V_2$. Then the solutions of the
two ODE systems
$$\frac{\partial F_1}{\partial t}=-V_1\circ F_1
\quad\mathrm{and}\quad \frac{\partial F_2}{\partial t}=-V_2\circ
F_2$$ with the same initial values must coincide, and hence two
solutions of \eqref{main}
$$(g_1,h_1)=F^*(\hat{g}_1,\hat{h}_1)\quad
and \quad (g_2,h_2)=F^*(\hat{g}_2,\hat{h}_2)$$ must agree.


\section{Preserving Gauss-Codazzi Equations}
In this section, we will show that the Gauss-Codazzi equations are
preserved under \eqref{main}. Let
$G_{ijkl}=R_{ijkl}-(h_{il}h_{jk}-h_{ik}h_{jl})$ and
$C_{ijk}=\nabla_{i}h_{jk}-\nabla_{j}h_{ik}$.
\begin{prop}
If the tensor $h_{ij}$ satisfies Gauss's equation and Codazzi's
equation
$$
\left\{
\begin{aligned}
    &R_{ijkl}-(h_{il}h_{jk}-h_{ik}h_{jl})=0\\
    &\nabla_{i}h_{jk}-\nabla_{j}h_{ik}=0
\end{aligned}
\right.
$$
at time $t=0$, then it remains so for $t>0$.
\end{prop}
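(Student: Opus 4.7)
The plan is to derive a closed linear parabolic system for the pair $(G_{ijkl}, C_{ijk})$ of the schematic form
\begin{equation*}
    \frac{\partial G_{ijkl}}{\partial t} = \triangle G_{ijkl} + \mathcal{L}_1(G,C,\nabla C), \qquad
    \frac{\partial C_{ijk}}{\partial t} = \triangle C_{ijk} + \mathcal{L}_2(G,C,\nabla C),
\end{equation*}
in which $\mathcal{L}_1, \mathcal{L}_2$ are tensorial expressions linear in the indicated arguments, with smooth coefficients built from $g_{ij}$, $h_{ij}$, the curvature tensor and their first covariant derivatives. Once such a system is in hand, $(G,C)\equiv 0$ is a solution; since $G$ and $C$ vanish at $t=0$ by hypothesis, the pair must remain zero as long as the flow exists---either by standard uniqueness for linear parabolic systems on the compact manifold $M$, or equivalently by a direct Gronwall estimate on $\int_M (|G|^2+|C|^2)\,d\mu_t$, where the $\nabla C$-contributions are absorbed after one integration by parts.

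To obtain the first equation I would start from the standard formula for how the Riemann tensor evolves under a general metric flow $\partial_t g_{ij} = -2 S_{ij}$: after using the second Bianchi identity on the Ricci part, this takes the form $\partial_t R_{ijkl} = \triangle R_{ijkl} + \mathrm{Rm}\ast\mathrm{Rm}$ plus the additional second covariant derivatives of $\hh{i}{j}$ coming from $S_{ij}=R_{ij}-\hh{i}{j}$. Next I would compute $\partial_t(h_{il}h_{jk}-h_{ik}h_{jl})$ directly by substituting the second line of \eqref{main} and using $\triangle(h_{il}h_{jk}) = h_{jk}\triangle h_{il} + h_{il}\triangle h_{jk} + 2 g^{mn}\nabla_m h_{il}\nabla_n h_{jk}$ to isolate a full Laplacian of the quadratic-in-$h$ expression. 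The specific coefficients $2\hh{i}{j}$ in $\partial_t g_{ij}$ and $2\hhh{i}{j} - |A|^2 h_{ij}$ in $\partial_t h_{ij}$ are chosen exactly so that when the two evolutions are subtracted, the Laplace pieces combine into $\triangle G_{ijkl}$ while the cubic-in-$h$ and $\mathrm{Rm}\ast h$ remainders cancel pairwise; the residual $\nabla h\ast\nabla h$ terms are then rewritten with $\nabla_i h_{jk} = \nabla_j h_{ik} + C_{ijk}$ and $R_{ijkl} = G_{ijkl} + (h_{il}h_{jk}-h_{ik}h_{jl})$, displaying every surviving term as linear in $G$, $C$ or $\nabla C$.

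For the Codazzi tensor I would start from $\partial_t(\nabla_i h_{jk}) = \nabla_i(\partial_t h_{jk}) + (\partial_t \Gamma_{ij}^l) h_{lk} + (\partial_t \Gamma_{ik}^l) h_{jl}$, expressing $\partial_t \Gamma$ through $\nabla S = \nabla R - \nabla(h\ast h)$. Commuting $\nabla_i$ with $\triangle$ in the $h$-equation produces $\triangle C_{ijk}$ plus curvature-commutator terms of the type $\mathrm{Rm}\ast\nabla h$ and $\nabla\mathrm{Rm}\ast h$; the Gauss identity lets me replace $\mathrm{Rm}$ by $G+h\ast h$, and the second Bianchi identity converts $\nabla\mathrm{Rm}$ into $\nabla G$ plus contractions of $\nabla(h\ast h)$. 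In these last contractions, $\nabla h$ is fully symmetric in its three indices modulo $C$, so the antisymmetrization in $(i,j)$ built into $C_{ijk}$ annihilates the $h\ast h$-derived pieces up to factors of $C$ and $\nabla C$, yielding the desired $\mathcal{L}_2$.

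The main obstacle is bookkeeping rather than conceptual: one has to verify that every tensorial term not manifestly carrying a factor of $G$, $C$ or $\nabla C$ actually cancels, and this cancellation is exactly the algebraic identity that forces the particular coefficients appearing in \eqref{main}---intrinsically encoding the fact that extrinsic mean curvature flow in Minkowski space automatically preserves the Gauss--Codazzi system. Once the closed linear system for $(G,C)$ is written down, the parabolic uniqueness / Gronwall conclusion on the compact manifold $M$ is immediate and gives the proposition.
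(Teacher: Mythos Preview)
Your proposal is correct and follows essentially the same route as the paper: derive evolution equations showing that $(\partial_t-\triangle)G$ and $(\partial_t-\triangle)C$ are built entirely from terms carrying factors of $G$, $C$, or $\nabla C$ (the paper obtains schematically $(\partial_t-\triangle)G = G*G + G*h*h + \nabla C*h + C*\nabla h + C*C$ and $(\partial_t-\triangle)C = -|A|^2 C + C*h*h + C*Rm + G*\nabla h$), then close with a Gronwall/maximum-principle argument on $|G|^2+|C|^2$. The only cosmetic differences are that the paper uses a moving orthonormal frame to streamline the tensor bookkeeping and applies the pointwise maximum principle rather than the integral estimate you propose; note also a sign slip in your formula for $\partial_t(\nabla_i h_{jk})$---the $\partial_t\Gamma$ terms enter with minus signs.
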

\begin{proof}
By direct calculations, we have
$$
    \begin{aligned}
    \frac{\partial}{\partial t}\Gamma_{ij}^{k}
        &=\frac{1}{2}g^{kl}\Big\{
                            \grd{j} \Big(\frac{\partial}{\partial t} g_{il}\Big)
                            +\grd{i} \Big(\frac{\partial}{\partial t} g_{jl}\Big)
                            -\grd{l} \Big(\frac{\partial}{\partial t} g_{ij}\Big)
                          \Big\}\\
    \end{aligned}
$$
    $$\frac{\partial}{\partial t}R_{ijl}^{k}
    =\grd{i}\Big( \frac{\partial}{\partial t} \Gamma_{jl}^{k} \Big)
    -\grd{j}\Big( \frac{\partial}{\partial t} \Gamma_{il}^{k}
            \Big)
        $$
    $$\frac{\partial}{\partial t}R_{ijkl}
        =g_{hk}\frac{\partial}{\partial t}R_{ijl}^{h}+\frac{\partial g_{hk}}{\partial
        t}R_{ijl}^{h}.$$
With these identities we get
\begin{equation*}
    \begin{aligned}
        \frac{\partial}{\partial t}R_{ijkl}
        &=\grd{i}\grd{k}R_{jl}-\grd{i}\grd{l}R_{jk}-\grd{j}\grd{k}R_{il}+\grd{j}\grd{l}R_{ik}\\
        &\quad-\grd{i}\grd{k}(\hh{j}{l})+\grd{i}\grd{l}(\hh{j}{k})\\
        &\quad+\grd{j}\grd{k}(\hh{i}{l})-\grd{j}\grd{l}(\hh{i}{k})\\
        &\quad-R_{ijks}(R_{tl}-\hh{t}{l})g^{st}-R_{ijsl}(R_{tk}-\hh{t}{k})g^{st}\\
    \end{aligned}
\end{equation*}
and the following identity
\begin{equation*}
    \begin{aligned}
        \triangle R_{ijkl}&=-2(B_{ijkl}-B_{ijlk}-B_{iljk}+B_{ikjl})\\
                       &\quad+\grd{i}\grd{k}R_{jl}-\grd{i}\grd{l}R_{jk}-\grd{j}\grd{k}R_{il}+\grd{j}\grd{l}R_{ik}\\
                       &\quad+R_{mjkl}R_{ni}g^{mn}+R_{imkl}R_{nj}g^{mn}
    \end{aligned}
\end{equation*}
where $B_{ijkl}=R_{mijs}R_{nklt}g^{mn}g^{st}$. \\
Then we obtain
\begin{equation}\label{6}
    \begin{aligned}
    &\Big(\frac{\partial}{\partial t}-\triangle \Big)R_{ijkl}
    -2(B_{ijkl}-B_{ijlk}-B_{iljk}+B_{ikjl})\\
    =&\quad-R_{ijks}(R_{tl}-\hh{t}{l})g^{st}-R_{ijsl}(R_{tk}-\hh{t}{k})g^{st}\\
    &\quad-R_{sjkl}(R_{ti}-\hh{t}{i})g^{st}-R_{iskl}(R_{tj}-\hh{t}{j})g^{st}\\
    &\quad-R_{sjkl}\hh{t}{i}g^{st}-R_{iskl}\hh{t}{j}g^{st}\\
    &\quad-\grd{i}\grd{k}(\hh{j}{l})+\grd{i}\grd{l}(\hh{j}{k})\\
    &\quad+\grd{j}\grd{k}(\hh{i}{l})-\grd{j}\grd{l}(\hh{i}{k})\\
    \end{aligned}.
\end{equation}
\par To simplify the evolution equations, we will use a moving frame trick.
 More precisely, let us pick an abstract vector bundle
$V$ over $M$ isomorphic to the tangent bundle $TM$. Choose an
orthonormal frame $F_{a}=F_{a}^{i}\frac{\partial}{\partial x^i},
a=1,\cdots,n$ of $V$ at $t=0$, then evolve $F_{i}^{a}$ by the
equation
$$
    \begin{aligned}
    \frac{\partial}{\partial t}F_{a}^{i}=g^{ij}(R_{jk}-\hh{j}{k})F_{a}^{k}
    \end{aligned}.
$$
Then the frame $F=\{F_{1},\cdots,F_{a},\cdots,F_{n}\}$ will remain
orthonormal for all time. In the following we will use indices
$a,b,\cdots$ on a tensor to denote its components in the evolving
orthonormal frame. In this frame we have the following:
\begin{equation}\label{7}
    \begin{aligned}
    &\Big(\frac{\partial}{\partial t}-\triangle \Big)R_{abcd}
    -2(B_{abcd}-B_{abdc}-B_{adcb}+B_{acbd})\\
    =&\quad-R_{sbcd}\hh{t}{a}g^{st}-R_{ascd}\hh{t}{b}g^{st}\\
    &\quad-\grd{a}\grd{c}(\hh{b}{d})+\grd{a}\grd{d}(\hh{b}{c})\\
    &\quad+\grd{b}\grd{c}(\hh{a}{d})-\grd{b}\grd{d}(\hh{a}{c})\\
    \end{aligned}
\end{equation}
and
\begin{equation}\label{8}
    \Big(\frac{\partial}{\partial t}-\triangle\Big)h_{ab}=-|A|^2h_{ab}.
\end{equation}
By calculations, we have
\begin{equation}\label{9}
    \begin{aligned}
    &\Big(\frac{\partial}{\partial t}-\triangle \Big)
    \{R_{abcd}-(h_{ad}h_{bc}-h_{ac}h_{bd})\}\\
    =\ &2(B_{abcd}-B_{abdc}-B_{adcb}+B_{acbd})\\
    &-R_{sbcd}\hh{t}{a}g^{st}-R_{ascd}\hh{t}{b}g^{st}\\
    &-\grd{a}\grd{c}(\hh{b}{d})+\grd{a}\grd{d}(\hh{b}{c})\\
    &+\grd{b}\grd{c}(\hh{a}{d})-\grd{b}\grd{d}(\hh{a}{c})\\
    &+2|A|^{2}(h_{ad}h_{bc}-h_{ac}h_{bd})\\
    &+2(\grd{m}h_{ad}\grd{n}h_{bc}-\grd{m}h_{ac}\grd{n}h_{bd})g^{mn}.
    \end{aligned}
\end{equation}
Then we want to replace $B_{abcd}$ by
$$\tilde{B}_{abcd}=\{R_{mabs}-(h_{ms}h_{ab}-h_{mb}h_{as})\}
\{R_{mcds}-(h_{ms}h_{cd}-h_{md}h_{cs})\}g^{mn}g^{st}$$ and replace
terms including $\nabla h,\nabla\nabla h$ by $C$ and $\nabla C$
respectively.\\
That is
\begin{equation}\label{10}
    \begin{aligned}
    &\quad B_{abcd}-B_{abdc}-B_{adcb}+B_{acbd}\\
   =&\ \tilde{B}_{abcd}-\tilde{B}_{abdc}-\tilde{B}_{adcb}+\tilde{B}_{acbd}\\
   &-\rhh{a}{b}{d}{c}-\rhh{c}{d}{b}{a}\\
   &+\rhh{a}{b}{c}{d}+\rhh{d}{c}{b}{a}\\
   &-\rdh{a}{d}{c}{b}+\rhh{a}{d}{c}{b}\\
   &-\rdh{b}{c}{a}{d}+\rhh{b}{c}{d}{a}\\
   &+\rdh{a}{c}{d}{b}-\rhh{a}{c}{d}{b}\\
   &+\rdh{b}{d}{a}{c}-\rhh{b}{d}{c}{a}\\
   &-\hhhh{a}{b}{c}{d}+\hhhh{a}{b}{d}{c}\\
   &+h_{ad}h_{bc}|A|^2-\hhdh{a}{d}{b}{c}-\hhdh{b}{c}{a}{d}\\
   &-h_{ac}h_{bd}|A|^2+\hhdh{a}{c}{b}{d}+\hhdh{b}{d}{a}{c}
    \end{aligned}
\end{equation}
and
\begin{equation}\label{11}
    \begin{aligned}
    &-\dda{a}{c}(\hh{b}{d})+\dda{a}{d}(\hh{b}{c})+\dda{b}{c}(\hh{a}{d})\\
    &-\dda{b}{d}(\hh{a}{c})
    +2(\grd{m}h_{ad}\grd{n}h_{bc}g^{mn}-\grd{m}h_{ac}\grd{n}h_{bd}g^{mn})\\
       \end{aligned}
\end{equation}
$$
\begin{aligned}
 =&-\nabla_{c}(\nabla_{a}h_{bm}-\nabla_{b}h_{am})h_{nd}g^{mn}-\nabla_{a}(\nabla_{c}h_{dm}-\nabla_{d}h_{cm})h_{nb}g^{mn}\\
    &+\nabla_{d}(\nabla_{a}h_{bm}-\nabla_{b}h_{am})h_{nc}g^{mn}-\nabla_{b}(\nabla_{c}h_{dm}-\nabla_{d}h_{cm})h_{na}g^{mn}\\
    &-(\nabla_{a}h_{bm}-\nabla_{b}h_{am})(\nabla_{c}h_{dn}-\nabla_{d}h_{cn})g^{mn}\\
    &-(\nabla_{a}h_{dm}-\nabla_{m}h_{ad})\nabla_{c}h_{bn}g^{mn}-(\nabla_{d}h_{am}-\nabla_{m}h_{ad})\nabla_{b}h_{cn}g^{mn}\\
    &+(\nabla_{a}h_{cm}-\nabla_{m}h_{ac})\nabla_{d}h_{bn}g^{mn}+(\nabla_{c}h_{am}-\nabla_{m}h_{ac})\nabla_{b}h_{dn}g^{mn}\\
    &+(\nabla_{m}h_{bc}-\nabla_{c}h_{mb})\nabla_{n}h_{ad}g^{mn}+(\nabla_{m}h_{bc}-\nabla_{b}h_{mc})\nabla_{n}h_{ad}g^{mn}\\
    &-(\nabla_{m}h_{bd}-\nabla_{d}h_{mb})\nabla_{n}h_{ac}g^{mn}-(\nabla_{m}h_{bd}-\nabla_{b}h_{md})\nabla_{n}h_{ac}g^{mn}\\
    &-R_{acbm}h_{ns}h_{td}g^{mn}g^{st}-R_{acms}h_{nd}h_{tb}g^{mn}g^{st}+R_{bcam}h_{ns}h_{td}g^{mn}g^{st}\\
    &+R_{bcms}h_{nd}h_{ta}g^{mn}g^{st}+R_{adbm}h_{ns}h_{tc}g^{mn}g^{st}+R_{adms}h_{nc}h_{tb}g^{mn}g^{st}\\
    &-R_{bdam}h_{ns}h_{tc}g^{mn}g^{st}-R_{bdms}h_{nc}h_{ta}g^{mn}g^{st}.
\end{aligned}$$
Let us denote curvature tensor by $Rm$ and denote any tensor product
of two tensors $S$ and $T$ by $S*T$ when we do not need the precise
expression. Therefore, if we replace terms including $Rm*h*h$ by
term $G*h*h$, with \eqref{9}\eqref{10}\eqref{11} and by some
calculation we obtain
\begin{equation}\label{12}
    \begin{aligned}
    \Big(\frac{\partial}{\partial t}-\triangle\Big)G=G*G+G*h*h+\nabla C*h+C*\nabla h+C*C
    \end{aligned},
\end{equation}
where $G_{ijkl}=R_{ijkl}-(h_{il}h_{jk}-h_{ik}h_{jl})$ and
$C_{ijk}=\nabla_{i}h_{jk}-\nabla_{j}h_{ik}$.\\
Since we have
\begin{equation*}
    \begin{aligned}
    &\frac{\partial}{\partial t}\nabla_{i}h_{jk}
    =\nabla_{i}\Big(\frac{\partial}{\partial t}h_{jk}\Big)
        -\Big(\frac{\partial}{\partial t}\Gamma_{ij}^{l}\Big)h_{lk}-\Big(\frac{\partial}{\partial
        t}\Gamma_{ik}^{l}\Big)h_{lj}\\
    =&\nabla_{i}(\triangle h_{jk}-R_{jm}h_{nk}g^{mn}-R_{km}h_{nj}g^{mn}+2h_{jm}h_{ns}h_{tk}g^{mn}g^{st}-|A|^{2}h_{jk})\\
    &-\Big(\frac{\partial}{\partial t}\Gamma_{ij}^{l}\Big)h_{lk}+\grd{i}\rh{k}{j}+\grd{k}\rh{i}{j}-\grd{m}R_{ik}h_{nj}g^{mn}\\
    &-\grd{i}h_{km}h_{ns}h_{tj}g^{mn}g^{st}-\grd{i}h_{ms}h_{nk}h_{tj}g^{mn}g^{st}-\grd{k}h_{im}h_{ns}h_{tj}g^{mn}g^{st}\\
    &-\grd{k}h_{ms}h_{ni}h_{tj}g^{mn}g^{st}+\grd{m}h_{is}h_{nj}h_{tk}g^{mn}g^{st}+\grd{m}h_{ks}h_{nj}h_{ti}g^{mn}g^{st}
    \end{aligned}
\end{equation*}
and
\begin{equation*}
    \begin{aligned}
    &\triangle(\grd{i}h_{jk})
    =g^{mn}\dda{m}{n}(\grd{i}h_{jk})\\
    =&\grd{i}(\triangle h_{jk})+R_{im}\grd{n}h_{jk}g^{mn}+2(R_{mijs}\grd{n}h_{tk}+R_{miks}\grd{n}h_{tj})g^{mn}g^{st}\\
    &+\grd{j}\rh{i}{k}-\grd{m}R_{ij}h_{nk}g^{mn}+\grd{k}\rh{i}{j}-\grd{m}R_{ik}h_{nj}g^{mn}.
    \end{aligned}
\end{equation*}
So we get
\begin{equation}\label{13}
    \begin{aligned}
    &\Big(\frac{\partial}{\partial t}-\triangle\Big)\grd{i}h_{jk}+
    \Big(\frac{\partial}{\partial t}\Gamma_{ij}^{l}\Big)h_{lk}\\
    =&-R_{jm}\grd{i}h_{nk}g^{mn}-R_{km}\grd{i}h_{nj}g^{mn}\\
    &-R_{im}\grd{n}h_{jk}g^{mn}
    +\grd{i}(2h_{jm}h_{ns}h_{tk}g^{mn}g^{st}-|A|^2h_{jk})\\
    &-\grd{i}\rh{j}{k}-\grd{j}\rh{i}{k}\\
    &-2(R_{mijs}\grd{n}h_{tk}+R_{miks}\grd{n}h_{tj})g^{mn}g^{st}\\
    &-\grd{i}h_{km}h_{ns}h_{tj}g^{mn}g^{st}-\grd{i}h_{ms}h_{nk}h_{tj}g^{mn}g^{st}\\
    &-\grd{k}h_{im}h_{ns}h_{tj}g^{mn}g^{st}-\grd{k}h_{ms}h_{ni}h_{tj}g^{mn}g^{st}\\
    &+\grd{m}h_{is}h_{tk}h_{nj}g^{mn}g^{st}+\grd{m}h_{ks}h_{ti}h_{nj}g^{mn}g^{st}.
    \end{aligned}
\end{equation}
Then in the moving frame we obtain
\begin{equation}\label{14}
    \begin{aligned}
    &\Big(\frac{\partial}{\partial t}-\triangle\Big)\grd{a}h_{bc}+|A|^{2}\grd{a}h_{bc}
    +\Big(\frac{\partial}{\partial
        t}\Gamma_{ij}^{l}\Big)h_{lk}F_{a}^{i}F_{b}^{j}F_{c}^{k}\\
    =&-\grd{a}h_{cm}h_{ns}h_{tb}g^{mn}g^{st}-\grd{a}h_{mb}h_{ns}h_{tc}g^{mn}g^{st}\\
    &-\grd{m}h_{bc}h_{ns}h_{ta}g^{mn}g^{st}
    +2\grd{a}h_{bm}h_{ns}h_{tkc}g^{mn}g^{st}\\
    &+2\grd{a}h_{cm}h_{ns}h_{tb}g^{mn}g^{st}+2\grd{a}h_{ms}h_{nb}h_{tc}g^{mn}g^{st}\\
    &-2\grd{a}h_{ms}h_{nt}h_{bc}g^{mn}g^{st}-\grd{a}h_{cm}h_{ns}h_{tb}g^{mn}g^{st}\\
    &-\grd{a}h_{ms}h_{nb}h_{tc}g^{mn}g^{st}
    -\grd{c}h_{am}h_{ns}h_{tb}g^{mn}g^{st}\\
    &+\grd{m}h_{as}h_{nb}h_{tc}g^{mn}g^{st}+\grd{m}h_{cs}h_{nb}h_{ta}g^{mn}g^{st}\\
    &-2R_{mabs}\grd{n}h_{tc}g^{mn}g^{st}-2R_{macs}\grd{n}h_{tb}g^{mn}g^{st}.\\
    \end{aligned}
\end{equation}
Then we replace terms including $\grd{}h$ by $C$ and terms including
$Rm$ by $G$. Finally, we have
\begin{equation}\label{15}
    \Big(\frac{\partial}{\partial
    t}-\triangle\Big)C=-|A|^2C+C*h*h+C*Rm+G*\grd{}h
\end{equation}
Combing \eqref{12}\eqref{15}, we obtain
\begin{equation}\label{16}
    \begin{aligned}
    &\Big(\frac{\partial}{\partial t}-\triangle\Big)(|G|^2+|C|^2)\\
    \leq &C_1(|G|^2+|C|^2)-2|\grd{}G|^2-2|\grd{}C|^2\\
    &+\langle G, G*G+G*h*h+\nabla C*h+C*\nabla h+C*C \rangle\\
    &+\langle C, -|A|^2C+C*h*h+C*Rm+G*\grd{}h\rangle\\
    \leq &C_2(|G|^2+|C|^2)
    \end{aligned}
\end{equation}
where we use Cauchy-Schwarz inequality, and for $0\leq t<\delta$ we
have bounded $|Rm|,|A|,|\nabla h|$. Thus, by the standard maximum
principle
\begin{equation*}
    \frac{d}{dt}(|G|^2+|C|^2)_{max} \leq C_2(|G|^2+|C|^2)_{max},
\end{equation*}
we get
$$(|G|^2+|C|^2)_{max}(t)\leq e^{C_2t}(|G|^2+|C|^2)_{max}(0).$$
Since $(|G|^2+|C|^2)_{max}(0)=0,$ the Gauss-Codazzi equations are
preserved as long as the solution exists.
\end{proof}
In the following we will still call $h_{ij}(x,t)$ the second
fundamental form and its trace $H$ the mean curvature.


\section{Evolution of metric and curvature}
Using Gauss-Codazzi equations, we rewrite our evolution equations in
the following
\begin{prop}
\begin{subequations}\label{pro}
    \begin{align}
    &\frac{\partial}{\partial t}g_{ij}=2Hh_{ij}\label{pro}\\
    &\Big(\frac{\partial}{\partial t}-\triangle\Big)h_{ij}=2H\hh{i}{j}-|A|^2h_{ij}\\
    &\Big(\frac{\partial}{\partial t}-\triangle\Big)H =-H|A|^2\\
    &\Big(\frac{\partial}{\partial t}-\triangle\Big)|A|^2
    =-2|\grd{}A|^2-2|A|^4.
    \end{align}
\end{subequations}
\end{prop}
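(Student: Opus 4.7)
The plan is to substitute the contracted Gauss equation into \eqref{main} and then to differentiate $H$ and $|A|^2$ as traces. The key preparatory identity is
$$R_{ij}\ =\ \hh{i}{j}\ -\ Hh_{ij},$$
obtained by a single contraction of $R_{ijkl}=h_{il}h_{jk}-h_{ik}h_{jl}$, which is now available by the previous proposition for all $t\ge0$.

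For the first equation I would plug this into $\partial_t g_{ij}=-2R_{ij}+2\hh{i}{j}$; the quadratic pieces cancel and leave $2Hh_{ij}$. For the second equation the same substitution rewrites each cross term $R_{im}h_{nj}g^{mn}$ as $\hhh{i}{j}-H\hh{i}{j}$, so the pair of such terms cancels exactly against the cubic $2\hhh{i}{j}$ in \eqref{main}, leaving $\triangle h_{ij}+2H\hh{i}{j}-|A|^2 h_{ij}$ as claimed.

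For the mean-curvature equation I would differentiate $H=g^{ij}h_{ij}$ in $t$. The first equation yields $\partial_t g^{ij}=-2Hh^{ij}$, contributing $-2H|A|^2$; next $g^{ij}\triangle h_{ij}=\triangle H$ (by $\nabla g=0$) and $g^{ij}\hh{i}{j}=|A|^2$ turn the trace of the second equation into $\triangle H+2H|A|^2-H|A|^2$, whose sum with $-2H|A|^2$ is the stated $\triangle H-H|A|^2$. For the final equation I differentiate $|A|^2=g^{ij}g^{kl}h_{ik}h_{jl}$: the two factors of $\partial_t g^{-1}$ contribute $-4H\,\mathrm{tr}(h^3)$, which is exactly cancelled by the $4H\,\mathrm{tr}(h^3)$ coming from pairing $2h^{ij}$ with the term $2H\hh{i}{j}$ of the second equation; the residual is $2h^{ij}\triangle h_{ij}-2|A|^4$, and the Leibniz rule
$$\triangle|A|^2\ =\ 2h^{ij}\triangle h_{ij}\ +\ 2|\grd{}h|^2$$
converts this into the stated $-2|\grd{}A|^2-2|A|^4$.

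No serious obstacle is anticipated; every step is algebraic once the contracted Gauss identity is available. The only subtle point is fixing the sign convention for Ricci — namely $R_{ij}=g^{kl}R_{likj}$ — so that the reduction in the first equation lands on $+2Hh_{ij}$ with the correct sign; this is the convention consistent with the hyperbolic sectional curvature in the model case $h=g$ and is forced on us by the Gauss equation as written in the definition of a spacelike manifold.
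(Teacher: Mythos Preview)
Your proposal is correct and is precisely the computation the paper has in mind: the paper gives no proof beyond the sentence ``Using Gauss--Codazzi equations, we rewrite our evolution equations,'' and your contracted Gauss identity $R_{ij}=\hh{i}{j}-Hh_{ij}$ followed by tracing is exactly how one fills that in. Your handling of the sign convention for Ricci is also the right consistency check.
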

\par Since $h_{ij}$ is positive at $t=0$ and $M$ is compact,
there are some $\varepsilon >0\ and\ \beta
>0$,such that $\beta Hg_{ij} \geq h_{ij} \geq \varepsilon Hg_{ij}$
at $t=0$ holds on $M$. We want to show that inequality remains true
as long as the solution of our evolution equation \eqref{main}
exists. For this purpose we need the following maximum principle for
tensor on manifolds, which is proved in \cite{Ha82}.\par Let $u^{k}$
be a vector field and let $M_{ij}$ and $N_{ij}$ be symmetric tensors
on a compact manifold $M$ which may all depend on time $t$. Assume
that $N_{ij}=p(M_{ij},g_{ij})$ is a polynomial in $M_{ij}$ formed by
contracting products of $M_{ij}$ with itself using the metric.
Furthermore, let this polynomial satisfy a null-eigenvector
condition, i.e. for any null-eigenvector $X$ of $M_{ij}$ we have
$N_{ij}X^{i}X^{j} \geq 0$. Then we have
\begin{thm}[Hamilton]\label{thm1}
Suppose that on $0 \leq t < T$ the evolution equation
\begin{equation*}
\frac{\partial}{\partial t}M_{ij}=\triangle
M_{ij}+u^k\grd{k}M_{ij}+N_{ij}
\end{equation*}
holds, where $N_{ij}=p(M_{ij},g_{ij})$ satisfies the
null-eigenvector condition above. If $M_{ij} \geq 0\ at\ t=0$, then
it remains so on $0 \leq t <T$.
\end{thm}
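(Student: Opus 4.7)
The plan is to apply the parabolic maximum principle at the first spacetime point where positivity of $M_{ij}$ fails, having first perturbed by a multiple of $g_{ij}$ in order to convert the resulting weak inequality into a strict one that contradicts the null-eigenvector hypothesis on $N_{ij}$.

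Fix $\epsilon > 0$ and a large constant $K > 0$ to be chosen, and set $M^{\epsilon}_{ij} := M_{ij} + \epsilon e^{Kt} g_{ij}$, so $M^{\epsilon} > 0$ at $t = 0$. Using $\nabla g = 0$, a direct substitution shows
\begin{equation*}
\partial_t M^{\epsilon}_{ij} = \triangle M^{\epsilon}_{ij} + u^k \nabla_k M^{\epsilon}_{ij} + N_{ij}(M) + \epsilon e^{Kt}\bigl(K g_{ij} + \partial_t g_{ij}\bigr).
\end{equation*}
Suppose for contradiction that $M^{\epsilon}$ fails to be strictly positive on $[0,T)$, and let $t_0 > 0$ be the first such time and $x_0 \in M$ a point where $M^{\epsilon}_{ij}(x_0,t_0)$ has a unit null eigenvector $X$. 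Extend $X$ to a vector field on a neighborhood of $x_0$ by parallel transport along radial geodesics, and take it $t$-independent, so that $\nabla X|_{x_0} = 0$ and $\partial_t X \equiv 0$.

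Form $f(x,t) := M^{\epsilon}_{ij}(x,t)\, X^i(x) X^j(x)$. By minimality of $t_0$, $f \geq 0$ on $M \times [0,t_0]$ with $f(x_0,t_0) = 0$, so $(x_0,t_0)$ is a spacetime minimum and one has $\partial_t f \leq 0$, $\nabla f = 0$, $\triangle f \geq 0$ there. Because $\nabla X$ vanishes at $x_0$ and $M^{\epsilon}_{ij}X^j = 0$, the spatial derivatives of $f$ reduce cleanly: $\nabla_k f = (\nabla_k M^{\epsilon}_{ij})X^i X^j$ and $\triangle f = (\triangle M^{\epsilon}_{ij})X^i X^j$, the extra contribution $M^{\epsilon}_{ij}(\triangle X)^i X^j$ vanishing by the symmetry of $M^{\epsilon}$ together with the null-eigenvector equation. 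Substituting the evolution equation, using $u^k \nabla_k f = 0$ and $|X| = 1$, and dropping the nonnegative $\triangle f$ term yields at $(x_0,t_0)$
\begin{equation*}
0 \;\geq\; N_{ij}(M)X^i X^j + \epsilon e^{Kt_0}\bigl(K + (\partial_t g)(X,X)\bigr).
\end{equation*}

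Although $X$ is not a null eigenvector of $M$ itself, it is an exact null eigenvector of $M + \epsilon e^{Kt_0} g$. Since $N$ is polynomial in its tensor argument, the null-eigenvector hypothesis applied to $M + \epsilon e^{Kt_0} g$ together with a Lipschitz estimate on $N$ furnishes $N_{ij}(M)X^i X^j \geq -C_0\, \epsilon e^{Kt_0}$, for a constant $C_0$ depending only on the a priori bounds for $M^{\epsilon}$ and $g$ on compact subintervals of $[0,T)$. Choosing $K$ strictly greater than $C_0 + \sup|\partial_t g|$ then forces $\partial_t f > 0$ at $(x_0,t_0)$, contradicting $\partial_t f \leq 0$. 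Hence $M^{\epsilon} > 0$ on $[0,T)$, and sending $\epsilon \to 0$ yields $M \geq 0$. I expect the main obstacle to be this last perturbation-stability step: one must verify that the polynomial $N$ depends Lipschitz-continuously enough on its argument that the error introduced by the $\epsilon e^{Kt}g$ shift is genuinely $O(\epsilon)$ with a constant absorbable by the $K$-weight, and that this calibration can be carried out uniformly up to the arbitrary existence time.
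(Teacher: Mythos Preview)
The paper does not supply its own proof of this statement; it merely quotes the result and cites Hamilton \cite{Ha82}. Your argument is essentially Hamilton's original one and is correct: perturb by $\epsilon e^{Kt}g_{ij}$, look at the first failure point, use the null-eigenvector condition on the perturbed tensor, and absorb the $O(\epsilon)$ error by choosing $K$ large.

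On the concern you raise at the end: it is not an obstacle. Since $N$ is a polynomial in $M_{ij}$ formed by contractions with $g$, and since $M$ is compact, on any fixed closed subinterval $[0,T']\subset[0,T)$ the tensors $g$, $g^{-1}$, $\partial_t g$, and $M_{ij}$ are all uniformly bounded; hence $N$ is Lipschitz in its tensor argument with a constant $C_0$ depending only on $T'$. Choosing $K>C_0+\sup_{[0,T']}|\partial_t g|$ gives the contradiction on $[0,T']$, and as $T'<T$ was arbitrary you get $M_{ij}\ge 0$ on all of $[0,T)$. There is no need for the constants to be uniform up to $T$ itself.
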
\par
An immediate consequence is
\begin{prop}
If $\varepsilon Hg_{ij} \leq h_{ij} \leq \beta Hg_{ij}$, and $H>0$
at $t=0$, then these remain so as long as the solution of
\eqref{main} exists.
\end{prop}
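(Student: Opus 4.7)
The strategy is to apply Hamilton's tensor maximum principle (Theorem~\ref{thm1}) to each of the symmetric tensors $M^{(1)}_{ij} := h_{ij} - \varepsilon H g_{ij}$ and $M^{(2)}_{ij} := \beta H g_{ij} - h_{ij}$, after first establishing that $H > 0$ is preserved. The positivity of $H$ follows at once from the scalar equation $(\partial_t - \triangle) H = -H|A|^2$ of the preceding proposition: on any interval $[0,T]$ of existence, $|A|^2$ is bounded, so the scalar maximum principle yields $H_{\min}(t) \geq H_{\min}(0)\exp\bigl(-\int_0^t \|A\|^2_\infty\,ds\bigr) > 0$.

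Next I would compute $(\partial_t - \triangle) M^{(k)}_{ij}$ explicitly. Using $\triangle g_{ij} = 0$ together with $\partial_t g_{ij} = 2Hh_{ij}$, one has
\begin{equation*}
\Big(\frac{\partial}{\partial t} - \triangle\Big)(Hg_{ij}) \;=\; g_{ij}(\partial_t - \triangle) H + 2H^2 h_{ij} \;=\; -H|A|^2 g_{ij} + 2H^2 h_{ij}.
\end{equation*}
Combined with the evolution equation for $h_{ij}$ this yields
\begin{equation*}
\Big(\frac{\partial}{\partial t} - \triangle\Big) M^{(1)}_{ij} \;=\; -|A|^2 M^{(1)}_{ij} + 2H\bigl(h_{im}h_{jn}g^{mn} - \varepsilon H\,h_{ij}\bigr),
\end{equation*}
and analogously
\begin{equation*}
\Big(\frac{\partial}{\partial t} - \triangle\Big) M^{(2)}_{ij} \;=\; -|A|^2 M^{(2)}_{ij} + 2H\bigl(\beta H\,h_{ij} - h_{im}h_{jn}g^{mn}\bigr).
\end{equation*}

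The decisive algebraic step is Hamilton's null-eigenvector check for the reaction tensors. At any $X$ annihilating $M^{(1)}$, i.e.\ $h_{ij}X^j = \varepsilon H X_i$, one has $h_{im}g^{mn}X^i = \varepsilon H X^n$, whence
\begin{equation*}
h_{im}h_{jn}g^{mn}X^iX^j \;=\; \varepsilon^2 H^2 |X|^2 \;=\; \varepsilon H \cdot h_{ij}X^iX^j,
\end{equation*}
so the bracket contracts to zero on $X\otimes X$ and the whole reaction is $\geq 0$; the identical computation with $\beta$ in place of $\varepsilon$ handles $M^{(2)}$. Substituting $h_{ij} = M^{(1)}_{ij} + \varepsilon H g_{ij}$ and $H = g^{ij}M^{(1)}_{ij}/(1 - \varepsilon n)$ (taking traces of the hypothesis forces $\varepsilon \le 1/n \le \beta$) rewrites the reaction term as a polynomial in $M^{(1)}_{ij}$ and $g_{ij}$, so Theorem~\ref{thm1} (with trivial drift $u^k = 0$) preserves $M^{(k)}_{ij} \ge 0$. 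The only non-routine point is this null-eigenvector cancellation, but it is essentially forced by the cubic term $2Hh_{im}h_{jn}g^{mn}$ in the evolution of $h_{ij}$ --- precisely the feature needed for a scalar-multiple-of-$Hg$ pinching of $h$ to propagate under the flow.
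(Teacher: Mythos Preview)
Your proof is correct and follows essentially the same route as the paper: first preserve $H>0$ via the scalar maximum principle applied to $(\partial_t-\triangle)H=-H|A|^2$, then compute the evolution of $M_{ij}=h_{ij}-\varepsilon Hg_{ij}$ (your formula agrees with the paper's), verify that the reaction term vanishes on any null eigenvector, and invoke Hamilton's tensor maximum principle (with the upper bound handled symmetrically). Your additional remark that $h_{ij}$ and $H$ can be rewritten as polynomials in $M_{ij}$ and $g_{ij}$ via the trace identity $H=(1-n\varepsilon)^{-1}g^{kl}M_{kl}$ is a point the paper glosses over, so in that respect your argument is slightly more careful.
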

\begin{proof}
First, by using maximum principle on
$$\Big(\frac{\partial}{\partial t}-\triangle\Big)H =-H|A|^2,$$ we
know $H>0$ as long as the solution of \eqref{main} exists.\\
Then we consider
$$
\begin{aligned}
    M_{ij}&=h_{ij}-\varepsilon Hg_{ij}\\
    \frac{\partial M_{ij}}{\partial t}&=\frac{\partial h_{ij}}{\partial  t}
        -\varepsilon \frac{\partial H}{\partial t}g_{ij}
        -\varepsilon H\frac{\partial g_{ij}}{\partial t}\\
    &=\triangle h_{ij}+2H\hh{i}{j}-|A|^2h_{ij}\\
    &\quad-\varepsilon(\triangle H-|A|^2H)g_{ij}-\varepsilon H(2Hh_{ij})\\
    &=\triangle M_{ij}+2H\hh{i}{j}\\
    &\quad-|A|^2(h_{ij}-\varepsilon Hg_{ij})-2\varepsilon H^2h_{ij}
\end{aligned}
$$
For any null vector $v^i$ of $M_{ij}$, we have
$$
    \begin{aligned}
    &[2H\hh{i}{j}-|A|^2(h_{ij}-\varepsilon Hg_{ij})-2\varepsilon
    H^2h_{ij}]v^j\\
    =&\ 2Hh_{im}g^{mn}(\varepsilon Hv_{n})-2\varepsilon H^2(\varepsilon Hv_i)\\
    =&\ 2H(\varepsilon Hv_i)\varepsilon H-2\varepsilon H^2(\varepsilon Hv_i)\\
    =&\ 0
    \end{aligned}
$$
Thus, $\varepsilon Hg_{ij} \leq h_{ij}$ follows from theorem
\ref{thm1}. Then $h_{ij} \leq \beta Hg_{ij}$ follows in the same
way.
\end{proof}

Finally, we state the higher derivative estimate in the following
proposition.
\begin{prop}\label{de}
There exist constants $C_m ,m=1,2,\cdots,$ such that if the second
fundamental form of a complete solution to our evolution equation is
bounded by $$|A| \leq M$$ up to time t with $0<t\leq 1/M$ , then the
covariant derivative of the second fundamental form is bounded by
$$|\grd{}A|\leq C_{1}M/\sqrt{t}$$ and the $m^{th}$ covariant
derivative of the second fundamental form is bounded by
$$|\grd{}^{m}A|\leq C_{m}M/t^{\frac{m}{2}}.$$
Here the norms are taken with respect to the evolving metric.
\end{prop}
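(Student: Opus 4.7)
The plan is a Bernstein-type argument using the parabolic maximum principle on a suitably weighted combination of $|\nabla^m A|^2$ and lower-order quantities, in the spirit of Shi's derivative estimates for the Ricci flow. The crucial structural simplification here is that, by Proposition~3.1, both Gauss's and Codazzi's equations are preserved: the Riemann tensor is a quadratic expression in $h$, and $\nabla h$ is totally symmetric. Therefore every computation involving curvature and its derivatives can be rewritten purely in terms of $A$ and its covariant derivatives, and no separate Shi-type estimate on $\nabla^m Rm$ is needed.

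The first step is to derive, by induction on $m$ starting from (4.2b), a schematic evolution inequality of the form
\begin{equation*}
\Big(\frac{\partial}{\partial t}-\triangle\Big)|\nabla^m A|^2 \leq -2|\nabla^{m+1} A|^2 + \sum_{i+j+k+l=m} \nabla^i A * \nabla^j A * \nabla^k A * \nabla^l A,
\end{equation*}
where commuting $\nabla$ with $\triangle$ produces curvature terms which, by Gauss's equation, are themselves quadratic in $h$. Every non-dissipation term is polynomial in $A$ and its covariant derivatives of order $\leq m$, with universal coefficients depending only on $m$ and the dimension $n$.

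For $m=1$, I would apply the maximum principle to the auxiliary function $f = t|\nabla A|^2 + \gamma |A|^2$ with $\gamma$ large. Using (4.2d) and the $m=1$ case of the schematic inequality, one gets
\begin{equation*}
\Big(\frac{\partial}{\partial t}-\triangle\Big) f \leq \bigl(1 + C\,t|A|^2 - 2\gamma\bigr)|\nabla A|^2 - 2\gamma|A|^4.
\end{equation*}
Under the hypotheses $|A|\leq M$ and $t\leq 1/M$, the factor $t|A|^2$ is bounded, so $\gamma$ can be chosen to render the coefficient of $|\nabla A|^2$ nonpositive; the maximum principle then yields $\sup_t f \leq \gamma\sup_{t=0}|A|^2$, whence $|\nabla A|^2 \leq C_1^2 M^2/t$. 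For $m\geq 2$ I would iterate by induction, applying the same scheme to
\begin{equation*}
f_m = t^m|\nabla^m A|^2 + \gamma_m\, t^{m-1}|\nabla^{m-1}A|^2,
\end{equation*}
with $\gamma_m$ large, and using the inductive bounds $|\nabla^k A|\leq C_k M/t^{k/2}$ for $k<m$ to dominate every cross term coming from the schematic evolution of $t^m|\nabla^m A|^2$.

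The main difficulty is combinatorial rather than conceptual: one must verify that every product $\nabla^i A * \nabla^j A * \nabla^k A * \nabla^l A$ with $i+j+k+l=m$ is either absorbed into the dissipation term $-|\nabla^{m+1}A|^2$ via Cauchy--Schwarz or controlled by the lower-order inductive hypotheses, and that the time weights $t^m$ and $t^{m-1}$ match the scaling of each term. Once this bookkeeping is carried out the constants $C_m$ depend only on $m$ and $n$.
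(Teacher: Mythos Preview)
Your approach is essentially identical to the paper's, which simply records the evolution equation $(\partial_t-\triangle)|\nabla^m A|^2=-2|\nabla^{m+1}A|^2+\sum_{i+j+k=m}\nabla^i A*\nabla^j A*\nabla^k A*\nabla^m A$ and then refers to Shi's Bernstein argument; you have supplied the details the paper omits. One slip: the index constraint in your schematic inequality should read $i+j+k+l=2m$ (equivalently, the paper's form with $i+j+k=m$ and a fourth factor $\nabla^m A$), not $i+j+k+l=m$---your $m=1$ computation already uses the correct scaling, so this is only a typo.
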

\begin{proof}
By direct caculation, for any m we have an equation
    \begin{equation*}
    \Big(\frac{\partial}{\partial
    t}-\triangle\Big)|\grd{}^{m}A|^2=-2|\grd{}^{m+1}A|^2+\sum_{i+j+k=m}
        \grd{}^{i}A*\grd{}^{j}A*\grd{}^{k}A*\grd{}^{m}A .
    \end{equation*}
So we can follow the same way using a somewhat standard Bernstein
estimate in PDEs to get our theorem(see \cite{Sh89} for Ricci flow).
\end{proof}

\section{Monotonicity formula and Long time behaviors}
First, by positivity of $h_{ij}$ we have $$H^2/n\leq|A|^2 < H^2.$$
Then from (4.1c) we get
$$-H^3<\Big(\frac{\partial}{\partial t}-\triangle\Big)H \leq-\frac{H^3}{n}.$$
Thus by maximum principle we obtain
\begin{equation}\label{5.1}
\frac{1}{\sqrt{2t+\frac{1}{H^{2}_{min}(0)}}} < H(t)
\leq\frac{1}{\sqrt{\frac{2}{n}t+\frac{1}{H^{2}_{max}(0)}}}.
\end{equation}
With applying maximum principle on (4.1d) again, we have
\begin{equation*}
|A|^2(t) \leq\frac{1}{2t+\frac{1}{|A|^{2}_{max}(0)}}.
\end{equation*}
Since
$$
\frac{1}{2nt+\frac{n}{H^{2}_{min}(0)}}<H^2(t)/n\leq|A|^2(t),
$$
we get
\begin{equation}\label{5.2}
\frac{1}{2nt+\frac{n}{H^{2}_{min}(0)}}< |A|^2(t)
\leq\frac{1}{2t+\frac{1}{|A|^{2}_{max}(0)}}.
\end{equation}
In particular, \eqref{5.2} implies $$|A|\rightarrow 0 \quad as \quad
t \rightarrow + \infty.$$ Combining with our derivatives estimate
(Proposition 4.4) we know the solution of our evolution equation
\eqref{main} exists for all the time.
\par We need the
following monotonicity formula to understand the long time behaviors
of the solution to \eqref{main}.
\begin{prop}
If $(g_{ij}(t),h_{ij}(t))$ is the solution of \eqref{main}, then we
have the formula
$$\frac{\partial}{\partial t}\int_{M}H^{n}d\mu_{t}
=-n(n-1)\int_{M}\frac{|\grd{}H|^2}{H^2}H^{n}d\mu_{t}
-n\int_{M}|h_{ij}-\frac{1}{n}Hg_{ij}|^{2}H^{n}d\mu_{t}.$$
\end{prop}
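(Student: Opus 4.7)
The plan is to reduce the claim to a direct computation: plug the evolution equations from Proposition 4.1 into the $t$-derivative of $\int_M H^n d\mu_t$, integrate by parts, and then recognize an algebraic identity that organizes the leftover pointwise terms as a squared traceless-part.

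First I would compute the evolution of the volume form. Since $\partial_t g_{ij} = 2H h_{ij}$, one has $\partial_t d\mu_t = \tfrac12 g^{ij}(\partial_t g_{ij})\,d\mu_t = H(g^{ij}h_{ij})\,d\mu_t = H^2 d\mu_t$. Combined with $\partial_t H = \triangle H - H|A|^2$ from (4.1c), the product rule yields
\[
\frac{\partial}{\partial t}\int_M H^n d\mu_t
= \int_M n H^{n-1} \triangle H \, d\mu_t - n\int_M H^n |A|^2\, d\mu_t + \int_M H^{n+2}\, d\mu_t.
\]
Integrating the Laplacian term by parts on the closed manifold $M$ turns $\int_M n H^{n-1} \triangle H \, d\mu_t$ into $-n(n-1)\int_M H^{n-2}|\grd{}H|^2\, d\mu_t = -n(n-1)\int_M \frac{|\grd{}H|^2}{H^2} H^n d\mu_t$, which is already the first term on the right-hand side of the claimed formula.

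The remaining task is to identify the pointwise combination $-nH^n|A|^2 + H^{n+2}$ with $-nH^n\bigl|h_{ij} - \tfrac{1}{n}Hg_{ij}\bigr|^2$. Expanding gives $\bigl|h_{ij} - \tfrac{1}{n}Hg_{ij}\bigr|^2 = |A|^2 - \tfrac{2}{n}H\cdot H + \tfrac{1}{n^2}H^2\cdot n = |A|^2 - \tfrac{1}{n}H^2$, so $n\bigl|h_{ij} - \tfrac{1}{n}Hg_{ij}\bigr|^2 = n|A|^2 - H^2$, which is exactly the needed identity. There is no real obstacle; the one conceptual point worth flagging is that the exponent $n$ in $H^n$ is chosen precisely so that the gain $H^2$ from $\partial_t d\mu_t$ and the $-H|A|^2$ zeroth-order term in the heat-type equation for $H$ combine into a non-positive quantity that is a squared traceless tensor. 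Any other exponent $k\neq n$ would leave a residue proportional to $H^{k+2}$ that cannot be written as a non-positive perfect square in $h - \tfrac{H}{n}g$, spoiling the monotonicity.
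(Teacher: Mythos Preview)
Your proof is correct and is exactly the direct calculation the paper alludes to: you correctly compute $\partial_t d\mu_t = H^2 d\mu_t$, apply (4.1c), integrate by parts, and use the identity $|h_{ij}-\tfrac{1}{n}Hg_{ij}|^2 = |A|^2 - \tfrac{1}{n}H^2$. The paper gives no further detail than ``direct calculation,'' so your argument matches the intended approach.
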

\begin{proof}
It follows from the evolution equations of Proposition 4.1 and
direct calculation.
\end{proof}
From proposition 5.1 we know
\begin{equation}\label{5.3}
0<\int_{M}H^{n}d\mu_{t}< C
\end{equation}
for all $t\in[0,+\infty)$.\\
This implies
\begin{equation*}
    \left\{
    \begin{aligned}
    &\int_{0}^{\infty}\!\!\!\!\int_{M}\frac{|\grd{}H|^2}{H^2}H^{n}d\mu_{t}<\infty\\
    &\int_{0}^{\infty}\!\!\!\!\int_{M}|h_{ij}-\frac{1}{n}Hg_{ij}|^{2}H^{n}d\mu_{t}<\infty
    \end{aligned}
    \right..
\end{equation*}
In particular, there is a sequence $t_k\rightarrow + \infty$ such
that
\begin{equation}\label{6.1}
t_k\int_{M}\frac{|\grd{}H|^2}{H^2}H^{n}d\mu_{t_k}\rightarrow0 \quad
as \quad k\rightarrow\infty
\end{equation}
and
\begin{equation}\label{6.2}
t_k\int_{M}|h_{ij}-\frac{1}{n}Hg_{ij}|^{2}H^{n}d\mu_{t_k}\rightarrow0
\quad as \quad k\rightarrow\infty.
\end{equation}
\par
Denote by
$$\epsilon_k=\frac{1}{|A|_{max}(t_k)} .$$
We parabolically scale the solution and shift the time $t_k$ to the
origin 0,
$$
\begin{aligned}
\tilde{g}_{ij}^k(\cdot,\tilde{t})=\epsilon_{k}^{-2}g_{ij}(\cdot,t_k+\epsilon_{k}^{2}\tilde{t}),\\
\tilde{h}_{ij}^k(\cdot,\tilde{t})=\epsilon_{k}^{-1}h_{ij}(\cdot,t_k+\epsilon_{k}^{2}\tilde{t}),\\
where \quad\tilde{t}\in[-t_k/\epsilon_{k}^{2},+\infty).
\end{aligned}
$$
We can check that
$(\tilde{g}_{ij}^k(\cdot,\tilde{t}),\tilde{h}_{ij}^k(\cdot,\tilde{t}))$
is still a solution to \eqref{main}. \\
Since
$$|\tilde{A}^k(\cdot,\tilde{t})|^2=
\frac{|A(\cdot,t_k+\epsilon_{k}^{2}\tilde{t})|^2}{|A|_{max}^2(t_k)},$$
and \eqref{5.2}, it follows that
\begin{equation}
\frac{1}{C_1}<|\tilde{A}^k(\cdot,\tilde{t})|^2<C_1 \quad
\mathrm{for}\ \tilde{t}\in[-t_k/2\epsilon_{k}^{2},0],
\end{equation}
where the constant $C_1$ is independent of $k$.
\par By our derivatives
estimate (Proposition 4.4), the uniform bound of the second
fundamental form $|\tilde{A}^k(\cdot,\tilde{t})|$ implies the
uniform bound on all the derivatives of the second fundamental form
at $\tilde{t}=0$ for all $k$. By Gauss equation we have uniform
bound of the curvature and all the derivatives of the curvature at
$\tilde{t}=0$ for all $k$.\\
By \eqref{5.3} we know
$$\int_{M}(\tilde{H}^k(\cdot,0))^{n}d\tilde{\mu}_{0}<C_2.$$
Combining with \eqref{5.1} it follows
\begin{equation}\label{5.5}
\mathrm{Vol}(M,\tilde{g}_{ij}^{k}(\cdot,0))<C_3.
\end{equation}
On the other hand, by Proposition 4.3, \eqref{5.2} and Gauss
equation we have
\begin{equation}\label{5.4}
0>-\frac{1}{C_4}\geq sec(M,\tilde{g}_{ij}^{k}(\cdot,0))>-1.
\end{equation}
With \eqref{5.4} and \eqref{5.5}, we can get the uniform upper bound
on their diameters and uniform lower bound on their volumes by using
the following theorem .
\begin{thm}[Gromov\cite{Gr}]
Let M be an n-dimensional closed Riemannian manifold of negative
curvature and $Sec(M) \geq-1$. If $n\geq8$, then $Vol(M)\geq
C(1+d(M))$ and for n=4,5,6,7, $Vol(M)\geq C(1+d^{1/3}(M))$, where we
denote volume of M by Vol(M), diameter of M by d(M) and the constant
$C>0$ depends only on n.
\end{thm}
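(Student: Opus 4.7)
The statement is classical (Gromov), so the plan is to outline how one derives such a volume-to-diameter bound for a closed Riemannian manifold $M^n$ satisfying $-1 \leq \mathrm{sec}(M) < 0$. The strategy rests on the Margulis lemma combined with a covering argument along a minimizing geodesic that realizes the diameter.

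First, I would invoke the Margulis lemma to produce a dimensional constant $\mu_n > 0$ and a decomposition $M = M_{\mathrm{thick}} \cup M_{\mathrm{thin}}$, where $M_{\mathrm{thick}}$ is the locus on which the injectivity radius is at least $\mu_n/2$, and $M_{\mathrm{thin}}$ is a disjoint union of tubular neighborhoods of short closed geodesics (no cusps occur since $M$ is closed). By Bishop--Gromov, each metric ball of radius $\mu_n/2$ in $M_{\mathrm{thick}}$ is embedded with volume at least a uniform $c_n > 0$; hence choosing a maximal $\mu_n$-separated net $\{p_i\}$ in $M_{\mathrm{thick}}$ yields $\mathrm{Vol}(M_{\mathrm{thick}}) \geq c_n \cdot \#\{p_i\}$.

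Second, fix a unit-speed minimizing geodesic $\gamma$ realizing $d(M)$, and decompose its image into thick arcs (total length $L_t$) and thin arcs (contained in Margulis tubes). Selecting $\lfloor L_t/\mu_n\rfloor$ points on the thick arcs at mutual distance at least $\mu_n$ places them in the net above, so $\mathrm{Vol}(M_{\mathrm{thick}}) \geq c_n L_t/\mu_n$. It remains to bound from below the volume contribution of the Margulis tubes traversed by $\gamma$: for each tube $T$ met by an arc of $\gamma$ of length $\ell_T$, a structural analysis of the virtually nilpotent holonomy around the core geodesic gives a dimension-dependent exponent $\alpha(n)$ with $\mathrm{Vol}(T) \geq c_n' \, \ell_T^{\alpha(n)}$. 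Here $\alpha(n) = 1$ for $n \geq 8$, while $\alpha(n) = 1/3$ for $4 \leq n \leq 7$. Summing the thick- and thin-part contributions, together with $L_t + \sum_T \ell_T \geq d(M)$, produces the stated bounds, the additive constant $1$ being absorbed by the trivial lower bound $\mathrm{Vol}(M) \geq c_n$ from any single embedded ball.

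The main obstacle lies entirely in step three: obtaining the dimension-sensitive tube volume estimate. This requires the classification of discrete virtually nilpotent subgroups of $\mathrm{Isom}$ acting on the $(n{-}1)$-dimensional normal cross-section of the core geodesic, together with a quantitative control on how much such a subgroup can collapse the cross-section while keeping the tube closed and embedded. The critical threshold at $n = 8$ reflects the point at which the available nilpotent collapse in a cross-section of dimension $n-1$ becomes too weak to beat linear volume growth; below this threshold, the tube can be much thinner and the best general bound one can hope for is the cube-root estimate. This dimensional analysis of Margulis tubes is where the real content of Gromov's argument resides, and the remainder of the proof is comparatively routine packing geometry.
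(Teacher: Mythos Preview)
The paper does not prove this theorem at all: it is quoted verbatim as a result of Gromov \cite{Gr} and used as a black box to convert the uniform volume upper bound \eqref{5.5} and the sectional-curvature pinching \eqref{5.4} into a uniform diameter bound. There is therefore no ``paper's own proof'' to compare your proposal against.

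That said, your outline is broadly in the spirit of Gromov's original argument, which does proceed via a Margulis-type lemma and an analysis of how a minimizing geodesic interacts with the thin part. A couple of cautions if you intend to flesh this out. First, the Margulis lemma in its classical form assumes pinched negative curvature $-1\le \mathrm{sec}\le -a^2<0$, whereas here the upper bound is only strict negativity; Gromov's 1978 paper handles this, but the tube structure is more delicate than in the pinched case and you should not take the usual thick--thin description for granted. Second, your explanation of the dimensional threshold at $n=8$ (``nilpotent collapse in the cross-section becomes too weak to beat linear growth'') is heuristic at best; the actual source of the exponents $1$ versus $1/3$ in Gromov's paper is a more concrete counting argument for the number of short homotopically distinct loops, and you would need to reproduce that rather than rely on a vague dimension count. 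These are the places where a genuine write-up would require real work; the packing argument along the thick part is, as you say, routine.
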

Now we know
$(M,\tilde{g}_{ij}^{k}(\cdot,0),\tilde{h}_{ij}^{k}(\cdot,0))$ is a
sequence which have uniform bound on sectional curvature, uniform
upper bound on diameters and uniform lower bound on volumes. Using
cheeger's Lemma in \cite{CE} we have the uniform lower bound of
their injective radii with respect to $\tilde{g}_{ij}^{k}(\cdot,0)$
for $n\geq 4$. Then we can apply the same argument of Hamilton's
compactness theorem in \cite{Ha95} to extract a convergent
subsequence
$(M,\tilde{g}_{ij}^{k_l}(\cdot,0),\tilde{h}_{ij}^{k_l}(\cdot,0))$
from $(M,\tilde{g}_{ij}^{k}(\cdot,0),\tilde{h}_{ij}^{k}(\cdot,0))$.
More precisely, there is a triple
$(M_{\infty},\tilde{g}_{ij}^{\infty}(\cdot,0),\tilde{h}_{ij}^{\infty}(\cdot,0))$
and a sequence of diffeomorphisms $f_l:M_{\infty}\rightarrow M_l$.
Notice that $M_{\infty}$ is diffeomorphism to $M$, since we have
uniform diameter bound.\! And the pull-back metrics
$(f_l)^{*}\tilde{g}_{ij}^{k_l}(\cdot,0)$ and the pull-back second
fundamental forms $(f_l)^{*}\tilde{h}_{ij}^{k_l}(\cdot,0)$ converge
in $C^\infty$ topology to
$(\tilde{g}_{ij}^{\infty}(\cdot,0),\tilde{h}_{ij}^{\infty}(\cdot,0))$
.
\par
From \eqref{6.1} and \eqref{6.2} we obtain
\begin{equation*}
t_{k_l}\epsilon_{k_l}^{-2}\int_{M}\frac{|\tilde{\grd{}}
\tilde{H}^{k_l}|^2(0)}{(\tilde{H}^{k_l})^2(0)}(\tilde{H}^{k_l})^{n}(0)d\tilde{\mu}_{t_{k_l}}\rightarrow0
\quad as \quad l\rightarrow\infty
\end{equation*}
and
\begin{equation*}
t_{k_l}\epsilon_{k_l}^{-2}\int_{M}|\tilde{h}^{k_l}_{ij}
-\frac{1}{n}\tilde{H}^{k_l}\tilde{g}^{k_l}_{ij}|^{2}(0)(\tilde{H}^{k_l})^{n}(0)d\tilde{\mu}_{t_{k_l}}\rightarrow0
\quad as \quad l\rightarrow\infty.
\end{equation*} Here the norm is taken with respect to
$\tilde{g}^{k_l}_{ij}(0)$.\\
Notice that $t_{k_l}\epsilon_{k_l}^{-2}$ and $|\tilde{H}^{k_l}(0)|$
and $\mathrm{Vol}(M,\tilde{g}_{ij}^{k_l}(\cdot,0))$ have uniform
lower bound, we have
$$|\tilde{\grd{}}\tilde{H}^{k_l}|(0)\rightarrow0\quad as \quad l\rightarrow\infty$$ and
$$|\tilde{h}_{ij}^{k_l}-\frac{1}{n}\tilde{H}^{k_l}\tilde{g}^{k_l}_{ij}|(0)\rightarrow0\quad as \quad l\rightarrow\infty.$$
Therefore, by Gauss equation, we know the sectional curvature of
$(M_{\infty},\tilde{g}_{ij}^{\infty}(\cdot,0),\tilde{h}_{ij}^{\infty}(\cdot,0))$
is a constant($\equiv-1/n$).


\bibliographystyle{amsplain}

\begin{thebibliography}{99}

\bibitem{Ha82} R. S. Hamilton,  {\it Three manifolds with positive
Ricci curvature }, J. Diff. Geom. {\bf 17} (1982), 255-306.

\bibitem{Ha95} R. S. Hamilton,  {\it The Formation of Singularities
in the Ricci Flow }, Surveys in J. Diff. Geom. {\bf 2} (1995),
7-136.

\bibitem{Hu84} G. Huisken,  {\it Flow by mean curvature of convex
surfaces into sphere }, J. Diff. Geom. {\bf 20} (1984), 237-266.

\bibitem{Sh89}  W. X. Shi, {\it Deforming the metric on complete
Riemannian manifold}, J. Diff. Geom. {\bf 30} (1989), 223-301.

\bibitem{CZ}  H. D. Cao, and  X. P. Zhu, {\it A complete proof of the
Poincar$\acute{e}$ and geometrization conjecture -- application of
the  Hamilton-Perelman theory of the Ricci flow,} Asian J. Math.
{\bf 10} (2006), no. 2, 165-492.

\bibitem{CE} J. Cheeger,  and D. Ebin,  {\it Comparison theorems in
Riemannian geometry,} North-Holland (1975).

\bibitem{GT} M. Gromov and W. Thurston,  {\it Pinching constants for
hyperbolic manifolds }, Invent. math. {\bf 89} (1989), 1-12.

\bibitem{Gr} M. Gromov,  {\it Manifolds of negative curvature },
J. Diff. Geom. {\bf 13} (1978), 223-230.

\bibitem{Ye} Ye. Rugang,  {\it Ricci flow, Einstein metrics and space forms },
Trans. Amer. Math. Soc. {\bf 338} (1993) ,no. 2, 871-896.

\bibitem{Ec} K. Ecker,  {\it Interior estimates and longtime solutions
for mean curvature flow of noncompact spacelike hyper surfaces in
Minkowski space}, J. Diff. Geom. {\bf 45} (1997), 481-498.

\bibitem{HE} S. W. Hawking and G. F. R. Ellis, {\it The large-scale structure
of Spacetime}, Cambridge Monographs on Mathematical Physics,
Cambridege, 1973.

\bibitem{PP}  P. Petersen, {\it Riemannian Geometry},
Berlin-Heidelberg: Springer-Verlag, 1997.

\bibitem{CG} J. Cheeger and D. Gromoll, {\it On the Structure of Complete Manifolds
of Nonnegative Curvature}, Ann. Math. {\bf 96} (1972), 413-443.

\bibitem{Fa} F. T. Farrell and P. Ontaneda, {\it The Moduli Space of Negatively
Curved Metrics of a Hyperbolic Manifold}, Arxiv mathDG.0805.2635

\bibitem{Fa1} F. T. Farrell and P. Ontaneda, {\it On the topology of the space of negatively
curved metrics}, Arxiv mathDG.0807.367
\end{thebibliography}

\end{document}